\documentclass[11pt,a4paper]{article}
\usepackage[T1]{fontenc}
\usepackage[utf8]{inputenc}
\usepackage{lmodern,microtype,amsmath,amssymb,amsthm}
\usepackage[margin=27mm]{geometry}
\usepackage{enumitem,needspace}
\usepackage[hidelinks]{hyperref}
\hypersetup{pdftitle={Conway's thrackle conjecture: A new upper bound of $4n/3$.},pdfauthor={Dániel Gábor Simon}}
\setlist[enumerate]{itemsep=2pt,topsep=3pt}
\newtheorem{theorem}{Theorem}
\newtheorem{lemma}[theorem]{Lemma}
\newtheorem{proposition}[theorem]{Proposition}
\newtheorem{pammertheorem}{Theorem}

\title{Conway's thrackle conjecture: a new upper bound of $4n/3$.}
\author{D\'aniel G\'abor Simon\thanks{ORCID: \url{https://orcid.org/0000-0002-3750-9666}}\\[4pt]
\small HUN-REN Alfr\'ed R\'enyi Institute of Mathematics,\\
\small ELTE E\"otv\"os Lor\'and University,\\[2pt]
\small Corresponding author: \href{mailto:dgsimon@renyi.hu}{\texttt{dgsimon@renyi.hu}}}
\date{}

\begin{document}
\maketitle

\begin{abstract}
A thrackle is a drawing of a graph in the plane in which every pair of
edges meets exactly once, either at a common endpoint or at a proper
crossing. Conway conjectured that every thrackle has at most as many
edges as vertices. We give a short proof that every thrackle on
$n\ge4$ vertices has at most $\lfloor4(n-1)/3\rfloor$ edges.
The proof combines Pammer's computer-assisted verification of the conjecture for graphs on at most twelve vertices with a discharging argument in suitable plane and projective-plane embeddings. We also give bipartite plane graphs showing
that the coefficient $4/3$ is best possible under the local hypotheses
used in the proof.
\end{abstract}

\noindent\textbf{Keywords:} thrackle; graph drawing; Conway's conjecture;
discharging; projective plane.\\
\textbf{2020 Mathematics Subject Classification:} 05C10, 05C62.

\section{Introduction}

Throughout the paper, we consider finite simple graphs $G$. In a drawing
of $G$ in the plane, vertices are distinct points, and each edge is a
simple arc joining its endpoints and containing no other vertex. A drawing
is called simple if any two edges meet at most once and every intersection
of edge interiors is a proper crossing. Drawings are in general position,
so no three edge interiors meet at a point.
A simple drawing is called a \emph{thrackle} if any two distinct edges
share exactly one point: either their common endpoint or a proper
crossing. A graph is \emph{thrackleable} if it admits such a drawing.
We write $n=|V(G)|$ and $m=|E(G)|$. Conway's thrackle conjecture,
recorded by Woodall~\cite{Woodall1971}, asserts that $m\le n$ for
every thrackleable graph.

The first linear upper bound
was proved by Lov\'asz, Pach, and Szegedy~\cite{LPS1997}, who showed that
$m\le2n-3$ for $n\ge3$. Cairns and Nikolayevsky~\cite{CN2000} improved
this to $m\le\frac32(n-1)$ using properties of generalized thrackles.
Later, Fulek and Pach~\cite{FP2011} introduced a computational method
for improving the upper bound and proved $m\le\frac{167}{117}n$.
The following bounds are stated for $n\ge4$, to exclude the exceptional
triangle. Xu~\cite{Xu2014} obtained $m\le1.4n$ by a short discharging
argument; the published result of Goddyn and Xu~\cite{GX2017} gives
$m\le1.4(n-1)$. Refining the discharging method, Fulek and
Pach~\cite{FP2019} proved $m\le1.3984n$, and Xu~\cite{Xu2021}
subsequently proved $m\le1.393(n-1)$.
T\'oth~\cite{Toth2026} and, independently, Scoppetto~\cite{Scoppetto2026}
obtained the bound $m\le\frac{18}{13}(n-1)$.

\Needspace{7\baselineskip}
We combine Pammer's computer-assisted verification for graphs on at most
twelve vertices with a short discharging argument to prove the following
bound.

\begin{theorem}\label{thm:1}
Let $G$ be a thrackleable graph on $n\ge4$ vertices and $m$ edges. Then
\[
 m\le\left\lfloor\frac{4(n-1)}3\right\rfloor .
\]
\end{theorem}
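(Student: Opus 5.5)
We argue by taking a counterexample $G$ with $n\ge4$ and $m>\lfloor 4(n-1)/3\rfloor$, minimal with respect to $n$. We then derive enough local structure to run a discharging argument. The standard facts used are the following. Thrackleable graphs contain no $C_4$; this is Conway's classical observation, used in all previous bounds. Every connected component behaves additively. Deleting a vertex of degree at most $1$ keeps the graph thrackleable. Minimality therefore lets us assume $G$ is connected with minimum degree at least $2$. A vertex of degree $1$ could be deleted, which loses one vertex and one edge; since $4/3>1$, the smaller graph is still a counterexample. Pammer's verification for $n\le 12$ lets us assume $n\ge 13$. It also shows that every subgraph on at most twelve vertices satisfies $m'\le n'$. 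In particular, this excludes small dense configurations: no two short cycles share a vertex in a way producing a subgraph with more edges than vertices on at most 12 vertices. Thus any two cycles of length at most $6$ are far apart or disjoint in a controlled way. For instance, two even cycles of length $\le 6$ sharing a path would give a theta-graph on $\le 10$ vertices, and that is excluded.

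Next we use the known structural reduction of Lovász--Pach--Szegedy and Cairns--Nikolayevsky. If $G$ is bipartite, a thrackle drawing can be converted into an outerplanar-like plane embedding, so $G$ is a plane graph with the same vertex/edge counts. If $G$ is not bipartite, after handling an odd cycle it embeds in the projective plane (the generalized-thrackle theory of Cairns--Nikolayevsky), with all faces of even length except controlled ones. In either case we obtain a cellular embedding on a surface of Euler characteristic $\chi\in\{1,2\}$, with $n-m+f\ge \chi$ (after possibly adding a crosscap). From $C_4$-freeness every face has length at least $6$ in the bipartite parity class. The only exceptions are a bounded number of faces tied to the odd cycle in the projective case.

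The discharging proceeds as follows. Assign charge $\deg(v)-3$ to each vertex... more precisely $2\deg(v)-6$ to each vertex and $\ell(F)-6$ to each face. The total is $4m-6n+2m-6f=6(m-n-f)\le -6\chi+\dots$. We aim to show a weighted version yielding $3m\le 4n-4$. A cleaner choice is charge $\ell(F)-6$ on faces and $2\deg(v)-6$ on vertices, which sums to $-6\chi$, combined with the goal of proving that degree-$2$ vertices are compensated. Degree-$2$ vertices carry charge $-2$, and these are what prevent $m\le n$. The rule is that each face of length $\ell\ge 8$ sends its surplus to incident degree-$2$ vertices. Faces of length exactly $6$ cannot have many degree-$3$-or-more vertices adjacent to each other without forming a small dense subgraph, which Pammer's result forbids. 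Here is where the constant $4/3$ enters. Writing $n_2$ for the number of degree-$2$ vertices, we get $2m\ge 3n-n_2$ from degrees. The discharging shows $n_2$ is at least a fixed fraction of the large-degree structure, forcing $m\le \frac43(n-1)$.

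The main obstacle is the local analysis: we must show each hexagonal face that is "saturated" by vertices of degree $\ge 3$ forces, via adjacent hexagons, a subgraph on at most $12$ vertices with more edges than vertices. Two hexagons sharing an edge already give $10$ vertices and $11$ edges, contradicting Pammer. That argument would even give faces pairwise edge-disjoint among hexagons, which is the key lever. We first try to prove exactly this: no two faces of length $6$ share an edge, and no face of length $8$ shares long paths with a hexagon. With it, each edge lies on at most one hexagon, so the average face length is at least about $7$. Euler's formula then gives $m\le \frac{7}{5}\cdot$... refining with the $\le 12$-vertex exclusions for length-$8$ faces (two octagons sharing an edge give $14$ vertices, outside Pammer's range) is exactly where the bound stalls at $4/3$. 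This matches the paper's claim that $4/3$ is optimal under these local hypotheses. Finally, the $-1$ in $n-1$ comes from $\chi\ge1$ and the floor from integrality.
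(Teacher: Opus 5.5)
There is a genuine gap at the core of the argument. Your reductions to a connected graph of minimum degree at least two that is not a cycle, and your choice of plane or projective-plane embedding, are fine in outline. But with $f=\chi+m-n$, the target $m\le 4(n-\chi)/3$ is equivalent to $2m\ge 8f$, i.e.\ average face length at least $8$, and you never prove this. Edge-disjointness of hexagonal faces, by your own estimate, gives average length about $7$, which is the old $\frac75$ coefficient. Your sentence that refining with octagon exclusions ``is exactly where the bound stalls at $4/3$'' asserts the conclusion rather than deriving it. Octagons are not the difficulty, since they already have length $8$; the whole problem is paying for the hexagons. Your vertex-charge scheme ($2\deg(v)-6$, $\ell(F)-6$, surplus sent to degree-$2$ vertices) has no rule and no check of final charges. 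Moreover, $2m\ge 3n-n_2$ is a lower bound on $m$, so it cannot produce an upper bound.

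The missing idea is a face-only discharging. Give each face charge $d(F)$, and let every hexagonal face take $1/3$ across each of its edges from the face on the other side. This works because of the following facts:
\begin{enumerate}[label=\textup{(\roman*)}]
\item Hexagonal faces are pairwise \emph{vertex}-disjoint, not merely edge-disjoint, since two $6$-cycles sharing a vertex span at most $11$ vertices. So no hexagon donates to another, and each hexagon ends with $8$.
\item No octagon borders a hexagon, since $6+8-2=12$.
\item A hexagon covers at most two consecutive edges of a decagon, since sharing more leaves at most $12$ vertices in the union. By (i), adjacent decagon edges cannot face different hexagons. Hence at most $6$ of a decagon's $10$ edges face hexagons, and it keeps at least $10-6/3=8$.
\item A face of length $d\ge 12$ keeps at least $2d/3\ge 8$.
\end{enumerate}
To apply these cycle statements to faces, you also need two facts you did not establish. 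First, every face has even length: in a parity embedding a facial walk is contractible, hence orientation-preserving, hence even. So the ``exceptional'' odd faces in your sketch do not exist, and this must be proved, since a $7$-face would already fall short of $8$. Second, every facial walk of length at most $12$ is a simple cycle: apply Pammer's result to the subgraph the walk traces, and exclude a double traversal because $G$ is not a cycle.
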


The same upper bound is also stated in a contemporaneous
preprint of Tomar~\cite{Tomar}, using separate computer searches
and a discharging argument. The current manuscript is independent work, and its first version was published on ArXiv the same day as Tomar's manuscript on Zenodo. 
Pammer verified Conway's conjecture for graphs
on at most twelve vertices in his 2014 master's thesis:

\begin{pammertheorem}[Pammer~{\cite[Observation~5.16, p.~54]{Pammer2014}}]
\label{thm:pammer}
Any thrackle on at most $12$ vertices contains at most as many edges as
vertices.
\end{pammertheorem}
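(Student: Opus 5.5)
The plan is to verify the statement by exhaustive computer search. First I would reduce it to a finite, structured family of small graphs, and then decide thrackleability of each member combinatorially.

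\textbf{Reduction.} Suppose a thrackle on at most $12$ vertices has more edges than vertices. Deleting edges preserves the thrackle property, so we may keep only edges until some connected component has exactly one more edge than vertex. That component contains two independent cycles, and hence one of two subgraphs: a theta graph (two vertices joined by three internally disjoint paths), or a dumbbell (two cycles sharing exactly one vertex, or disjoint cycles joined by a path). I would take as known the classical facts from the literature:
\begin{itemize}
\item $C_4$ is not thrackleable, and theta graphs are not thrackleable (Lov\'asz--Pach--Szegedy, Cairns--Nikolayevsky);
\item every cycle other than $C_4$ is thrackleable;
\item a thrackleable graph does not contain two disjoint odd cycles.
\end{itemize}
Consequently a counterexample must contain a thrackleable dumbbell on at most $12$ vertices in which at least one cycle is even. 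This leaves a finite list of graphs, indexed by the two cycle lengths and the handle length. It can be pruned further by the known results on dumbbells obtained from standard musquashes and on short even cycles.

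\textbf{Combinatorial decision procedure.} For each remaining dumbbell $D$, I would encode a candidate thrackle drawing purely combinatorially:
\begin{itemize}
\item a rotation system at each vertex;
\item for each edge, a linear order of the crossings along it, where every pair of non-adjacent edges crosses exactly once and adjacent edges do not cross.
\end{itemize}
Planarizing the drawing (turning crossings into degree-$4$ vertices) produces a graph with a prescribed rotation. Such a drawing exists if and only if this rotated planarization has genus $0$, which is checked by face tracing via Euler's formula. The search would proceed edge by edge, inserting each edge into the current planarized partial drawing. Since edges are added in order, every pair's single crossing is placed when the later edge is inserted. The search backtracks as soon as a partial drawing cannot be extended, and it uses symmetry (reflection and relabelling) to cut branches. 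If no dumbbell survives, the statement follows.

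\textbf{Main obstacle.} I expect the difficulty to be the size of the search space for the larger dumbbells, for example two $6$-cycles sharing a vertex (this has $11$ vertices and $12$ edges), where the number of crossing orders explodes. To tame it, I would first try to precompute the thrackle drawings of each single cycle $C_k$, $k\le 11$, up to isotopy of the sphere. Each dumbbell would then be built by gluing two precomputed cycle drawings, together with the handle path, in every compatible way. Because the pieces are already thrackles, only the mutual crossings between the two parts need to be enumerated. If that is still too large, I would attach the edges of the second cycle one at a time and prune with the parity constraints on crossings for closed curves in the plane (the Cairns--Nikolayevsky parity argument).
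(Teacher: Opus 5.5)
Your overall plan (reduce to a minimal bicycle subgraph on at most twelve vertices, then rule out each one by exhaustive search) is the right kind of argument. The decision procedure you describe is a sound way to run that search: rotation systems, crossing orders, planarization with a face-tracing genus test, and edge-by-edge insertion. The paper itself does not reprove the statement; it cites Pammer and reports an independent computer check of all minimal types. The gap in your proposal is in the pruning step.

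\textbf{Theta graphs.} ``Theta graphs are not thrackleable'' is not a theorem of Lov\'asz--Pach--Szegedy or Cairns--Nikolayevsky. A theta graph has one more edge than vertices, so this claim is itself an instance of Conway's conjecture, and it is not known in general. Were it known, any two cycles sharing an edge would already be impossible in a thrackle. Lemma~\ref{lem:intersections}\ref{lem:68} and~\ref{lem:610} would then need no computation, and every block of a thrackleable graph would be an edge or a cycle, since a $2$-connected graph other than a cycle contains a theta. Small thetas are in fact exactly what the paper extracts from Theorem~\ref{thm:pammer}: a $6$-cycle and an $8$-cycle sharing an edge, or a $10$-cycle and a $6$-cycle sharing a path, contain theta subgraphs on at most twelve vertices. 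So the search you plan would skip precisely the cases that matter.

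\textbf{Figure-eights of two odd cycles.} The parity fact forbids two \emph{vertex-disjoint} odd cycles. It therefore only prunes dumbbells whose connecting path has positive length. None of the facts you list excludes two odd cycles sharing exactly one vertex, for example two triangles sharing a vertex, or $C_3$ and $C_9$. You discard these as well, without justification.

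\textbf{Fix.} Keep only the justified pruning: graphs containing $C_4$, and two odd cycles joined by a path of length at least one. Then run your procedure on all remaining theta graphs and figure-eights too. The paper's verification prunes nothing: its count of $126$ minimal types equals $56+20+50$, the numbers of theta graphs, figure-eights and dumbbells on at most twelve vertices. For the thetas, your cycle-gluing shortcut needs adapting. A theta is a cycle plus an ear, not two cycles joined at a vertex or by a path, and cycles of length $12$ occur, beyond your precomputed range $k\le 11$.
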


We use Theorem~\ref{thm:pammer} first to restrict intersections of small
cycles (Section~\ref{sec:definitions}) and then to control the subgraphs
traced by short facial walks. In Section~\ref{sec:discharging} these
two consequences give average facial degree at least eight in the
connected graphs to which we apply discharging. We also show why the
local hypotheses alone cannot give a smaller coefficient.
Section~\ref{sec:reductions} handles leaves, cycles and disconnected
components to complete the proof.

\section{Embeddings and short cycles}\label{sec:definitions}

We use embeddings in the plane and in the projective plane. A cycle in
a surface is \emph{two-sided} if it has an annular neighbourhood, and
\emph{one-sided} if it has a M\"obius-band neighbourhood. A
\emph{parity embedding} is an embedding in which even cycles are
two-sided and odd cycles are one-sided. We recall the following
established thrackle facts.

\begin{theorem}\label{thm:2}
The following hold for thrackleable graphs.
\begin{enumerate}[label=\textup{(\roman*)}]
\item\label{fact:C4} A thrackleable graph contains no four-cycle.
\item\label{fact:plane} A connected, bipartite thrackleable graph has a
plane embedding.
\item\label{fact:parity} A connected nonbipartite thrackleable graph has
a parity embedding in the projective plane.
\end{enumerate}
\end{theorem}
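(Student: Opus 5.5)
This theorem is a collection of known facts, so the plan is to cite or reprove each item from the standard literature on thrackles. The main tool is Cairns and Nikolayevsky's theory of generalized thrackles~\cite{CN2000}, together with the classical observation that a four-cycle is not thrackleable.

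For \ref{fact:C4}, it suffices to show that $C_4$ itself has no thrackle drawing, because a thrackle drawing of $G$ restricts to a thrackle drawing of every subgraph. Let $C_4=v_1v_2v_3v_4$. In a thrackle drawing the two pairs of opposite edges must cross, and adjacent edges meet only at their common endpoint. Consider the closed curve formed by $v_1v_2$ and $v_2v_3$ together with the path $v_3v_4v_1$. I would argue with the Jordan curve theorem on the closed curve $v_1v_2v_3v_4$ itself. Each of the two crossings is a self-crossing of this closed curve. One can check directly that the edge $v_3v_4$, starting near $v_3$ and crossing $v_1v_2$ exactly once, forces $v_4v_1$ to cross $v_2v_3$ an even number of times, which is a contradiction. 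Alternatively, I would cite Woodall's and Lov\'asz--Pach--Szegedy's parity lemma: in a thrackle, every even cycle is ``odd-crossing'' in a way that is impossible for length four. I expect this to be a short citation, namely the well-known fact that $C_4$ is not thrackleable~\cite{LPS1997}.

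For \ref{fact:plane} and \ref{fact:parity}, the key step is the Lov\'asz--Pach--Szegedy transformation. Take a thrackle drawing of $G$ and reroute edges so that every pair of edges crosses an odd number of times, or meets at a common endpoint with even crossings. This yields a \emph{generalized thrackle}. Cairns and Nikolayevsky~\cite{CN2000} prove the following two facts:
\begin{enumerate}[label=\textup{(\alph*)}]
\item a bipartite graph is a generalized thrackle if and only if it is planar, via the LPS operation of redrawing one colour class on the ``other side'' to turn odd crossing parities into even ones, followed by Hanani--Tutte;
\item a connected nonbipartite generalized thrackle admits a parity embedding in the projective plane.
\end{enumerate}
Since every thrackle is a generalized thrackle, \ref{fact:plane} follows from (a) together with connectivity, and \ref{fact:parity} follows from (b).

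The main obstacle, if one wanted to reprove rather than cite, is \ref{fact:parity}. There one needs the projective-plane version of the Hanani--Tutte theorem, applied to a drawing in which the crossing parity of two independent edges matches whether the cycle through them is odd. Building the parity embedding so that exactly the odd cycles become one-sided requires tracking how the LPS flip interacts with a M\"obius band. For the purposes of this paper I would simply invoke \cite{LPS1997,CN2000} for all three items.
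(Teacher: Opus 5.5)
Your plan matches the paper's, which also cites rather than proves: \ref{fact:C4} and \ref{fact:plane} come from Lov\'asz--Pach--Szegedy \cite{LPS1997} (Lemma~2.1 and Theorem~1.4), and \ref{fact:parity} from Cairns--Nikolayevsky. Routing \ref{fact:plane} through the bipartite generalized-thrackle characterization is also fine. The problem is your source for \ref{fact:parity}, the only deep item. The statement that a connected nonbipartite generalized thrackle has a parity embedding in the projective plane is not the content of \cite{CN2000}, whose embedding theorem concerns bipartite generalized thrackles. It is the main result of the later paper \cite{CN2009} (Theorem~2 there), which is what the paper cites; it is also recorded as Corollary~1 of \cite{FP2019}. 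Since you explicitly decline to reprove \ref{fact:parity}, this citation is the entire justification, so it must point to the right result. Also, no rerouting is needed to obtain a generalized thrackle: a thrackle already is one. The Lov\'asz--Pach--Szegedy flip belongs inside the proof of (a), not in the passage from thrackle to generalized thrackle.

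Your direct argument for \ref{fact:C4} does not work as written. The closed curve $v_1v_2v_3v_4$ has two self-crossings, so the Jordan curve theorem does not apply to it, and the parity conclusion is asserted rather than derived. A correct version uses a simple closed curve. Let $x$ be the crossing of $v_1v_2$ with $v_3v_4$. Let $\Gamma$ consist of the part of $v_1v_2$ from $x$ to $v_2$, the edge $v_2v_3$, and the part of $v_3v_4$ from $v_3$ to $x$; since adjacent edges meet only at their common endpoint, $\Gamma$ is simple. At the proper crossing $x$, the two half-arcs used by $\Gamma$ are adjacent in the rotation. So the half-arcs towards $v_1$ and $v_4$ leave $x$ on the same side of $\Gamma$ and never meet $\Gamma$ again; hence $v_1$ and $v_4$ lie on the same side of $\Gamma$. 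But $v_4v_1$ meets $\Gamma$ exactly once, in its proper crossing with $v_2v_3$, because it meets $v_1v_2$ and $v_3v_4$ only at $v_1$ and $v_4$, which are off $\Gamma$. This is a contradiction. Alternatively, simply cite \cite[Lemma~2.1]{LPS1997}, as the paper does.
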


For parts~\ref{fact:C4} and~\ref{fact:plane}, see Lov\'asz, Pach and
Szegedy~\cite[Lemma~2.1 and Theorem~1.4]{LPS1997}.
Part~\ref{fact:parity} follows from
Cairns--Nikolayevsky~\cite[Theorem~2]{CN2009}; the combined embedding
statement is also recorded in Fulek--Pach~\cite[Corollary~1]{FP2019}.
These are crossing-free embeddings of the same abstract graph $G$:
its $n$ vertices and $m$ edges remain unchanged. Crossings of the
original thrackle drawing do not become vertices in these embeddings.
Two distinct faces are called neighbours if they share an edge.

Every subgraph of a thrackleable graph is thrackleable.
Theorem~\ref{thm:pammer} therefore implies that every connected subgraph
on at most twelve vertices contains at most one cycle. We will use the
following three consequences.

\begin{lemma}\label{lem:intersections}
In a thrackleable graph, the following statements hold.
\begin{enumerate}[label=\textup{(\roman*)}]
\item\label{fact:C6} Distinct $6$-cycles have disjoint vertex sets.
\item\label{lem:68} A $6$-cycle and an $8$-cycle cannot share an edge.
\item\label{lem:610} A $10$-cycle and a $6$-cycle cannot share a path
of three or more edges.
\end{enumerate}
\end{lemma}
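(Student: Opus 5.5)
The plan is to reduce each part to the consequence of Theorem~\ref{thm:pammer} recorded just above: every connected subgraph on at most twelve vertices has at most as many edges as vertices, and so contains at most one cycle. In each case we take the union $H$ of the two cycles. This is a connected subgraph, since the cycles share a vertex or an edge. We then count its vertices and edges, show that $H$ has at most twelve vertices, and show that $|E(H)|>|V(H)|$, which is a contradiction. The basic identity is the following. If two cycles $C,C'$ meet, and their intersection $C\cap C'$ (as a subgraph) has $a$ vertices and $b$ edges, then
\[
|V(H)|=|C|+|C'|-a,\qquad |E(H)|=|C|+|C'|-b,
\]
so $|E(H)|-|V(H)|=a-b$. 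Hence it suffices to show that $C\cap C'$ has more vertices than edges, i.e.\ $a>b$, and that $|C|+|C'|-a\le 12$.

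Now $C\cap C'$ is a subgraph of the cycle $C$. If $C\ne C'$, it is a proper subgraph of $C$, because two distinct cycles cannot be contained in one another. So $C\cap C'$ is a disjoint union of paths, some possibly single vertices, and therefore $a-b$ equals its number of components, which is at least $1$ whenever the cycles meet. So $|E(H)|\ge |V(H)|+1$ in every case. It remains only to bound $|V(H)|=|C|+|C'|-a$ by $12$.

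For~\ref{fact:C6}, two distinct $6$-cycles sharing at least one vertex give $a\ge1$. The crude bound is then $|V(H)|\le 11$, which already suffices. For~\ref{lem:68}, a shared edge gives $a\ge2$, so $|V(H)|\le 6+8-2=12$. For~\ref{lem:610}, a shared path of three edges has four vertices, so $a\ge4$ and $|V(H)|\le 6+10-4=12$. In each case $H$ is a connected thrackleable graph on at most twelve vertices with more edges than vertices, contradicting Theorem~\ref{thm:pammer}.

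The only point needing care is the claim that the intersection of two distinct cycles is a forest of paths (so $a>b$) even when the cycles share several disjoint segments. The argument is that any proper subgraph of a cycle is a linear forest. One also has to check the degenerate possibility $C'\subseteq C$, which is excluded because a cycle contains no other cycle. I expect no real obstacle beyond this bookkeeping. Theorem~\ref{thm:2}\ref{fact:C4} is not needed here, although it would rule out some small configurations anyway.
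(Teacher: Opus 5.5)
Your proof is correct and follows the same approach as the paper. Both take the union of the two cycles, observe that it has more edges than vertices, and bound its order by $11$, $12$ and $12$ respectively to contradict Pammer's theorem. Your computation that $|E(H)|-|V(H)|$ equals the number of components of the linear forest $C\cap C'$ spells out the step the paper simply asserts.
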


\begin{proof}
The union of two distinct cycles sharing a vertex is connected and has
more edges than vertices. By Theorem~\ref{thm:pammer}, such a union
must have at least thirteen vertices.

Two $6$-cycles sharing a vertex have at most $11$ vertices in their
union, proving~\ref{fact:C6}. A $6$-cycle and an $8$-cycle sharing
an edge have at most $6+8-2=12$ vertices in their union,
proving~\ref{lem:68}. Finally, if a $10$-cycle and a $6$-cycle shared
a path of three edges, their union would have at most $6+10-4=12$
vertices, proving~\ref{lem:610}. These arguments also allow further
common vertices or edges.
\end{proof}

Part~\ref{fact:C6} was also proved in Fulek--Pach~\cite{FP2011}.

\section{Short faces and the discharging argument}\label{sec:discharging}

Our main estimate keeps track of the Euler characteristic of the
surface in which we embed the graph.

\begin{lemma}\label{lem:main}
Let $G$ be a connected thrackleable graph of minimum
degree at least two, and suppose that $G$ is not a cycle. Set $\chi=2$
if $G$ is bipartite and $\chi=1$ otherwise. Then
\[
 m\le\left\lfloor\frac{4(n-\chi)}3\right\rfloor
 \le\left\lfloor\frac{4(n-1)}3\right\rfloor.
\]
\end{lemma}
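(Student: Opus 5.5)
Take the embedding from Theorem~\ref{thm:2}: a plane embedding when $G$ is bipartite and a parity embedding in the projective plane otherwise. In both cases the surface has Euler characteristic $\chi$. The goal is to show that the facial walks have average length at least eight, i.e.
\[
\sum_{f}|f| = 2m \ge 8F .
\]
Once this holds, Euler's formula $n-m+F\ge\chi$ (with equality for a cellular embedding; we can arrange cellularity or note that the inequality goes the right way) gives $n-m+m/4\ge\chi$, i.e. $m\le\frac43(n-\chi)$. Integrality of $m$ gives the floor, and the second inequality is trivial since $\chi\ge1$.

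To get average face length at least eight, I first record which short faces can occur. Since $G$ has minimum degree two, is connected and is not a cycle, each facial walk has length at least $3$.
\begin{itemize}
\item By Theorem~\ref{thm:2}\ref{fact:C4} and the parity property, faces of length $3$, $4$ and $5$ are excluded, or reduced to a closed walk tracing a tree or a single cycle. Such degenerate short walks force a small subgraph that, together with the rest of $G$, contradicts Theorem~\ref{thm:pammer}.
\item A short odd cycle bounding a face would be two-sided, hence excluded in a parity embedding. In the bipartite plane case all faces are even.
\item So the only short faces to worry about are faces of length $6$ (and possibly $7$ in the projective case, which I would try to rule out similarly using parity and Pammer's bound).
\item Faces of length $8$ and $10$ are exactly neutral or have surplus.
\end{itemize}

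\textbf{Discharging.} Give each face $f$ charge $|f|-8$, so the total charge is $2m-8F$, and we need to show it is nonnegative. Each hexagonal face needs $2$ units. By Lemma~\ref{lem:intersections}\ref{fact:C6} the $6$-faces are pairwise vertex-disjoint, so every neighbour of a $6$-face across an edge is a face of length at least $8$. By Lemma~\ref{lem:intersections}\ref{lem:68} a $6$-face and an $8$-face share no edge, so the neighbours actually have length at least $10$. The rule is: each $6$-face takes $1/3$ from the neighbouring face across each of its six edges, receiving $2$ in total. A face $g$ of length $\ell\ge10$ loses $1/3$ per edge it shares with a hexagon.

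\textbf{Main obstacle.} The hardest step is checking that $10$-faces are not overdrawn. Because $6$-faces are vertex-disjoint, consecutive edges of $g$ cannot both border hexagons. Lemma~\ref{lem:intersections}\ref{lem:610} further restricts how a $10$-face meets a single hexagon: it cannot share a path of three edges. The argument should show that a $10$-face shares at most $6$ edges with hexagons, giving charge $2-6/3\ge0$. For $\ell\ge12$, the at-most-alternating pattern gives loss at most $\ell/6\le\ell-8$.

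The delicate points are twofold. First, facial walks are not necessarily cycles: an edge may appear twice on the same face, or a face may meet the same hexagon in several segments. Second, the bound must be verified against the vertex-disjointness of hexagons along the boundary walk. I would handle these by arguing on the boundary walk with Pammer's twelve-vertex bound applied to the union of a short walk and an adjacent hexagon, as in the proof of Lemma~\ref{lem:intersections}.
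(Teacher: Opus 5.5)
Your framework matches the paper's: the same embeddings, Euler's formula, the rule of $1/3$ per edge into hexagons, and the target of six covered edges on a decagon. But the step everything rests on is missing: that every face of length at most $12$ is bounded by a simple cycle.

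\textbf{The missing short-face lemma.} Your first bullet does not establish it. A closed walk tracing a tree would need an immediate reversal, which is excluded by minimum degree two, not by Theorem~\ref{thm:pammer}. And ``a small subgraph together with the rest of $G$'' is not a graph on at most twelve vertices, so Pammer's bound says nothing about it. You also leave $7$-faces explicitly open. Without this lemma:
\begin{itemize}
\item Lemma~\ref{lem:intersections}, which is about cycles, cannot be applied to hexagonal, octagonal or decagonal faces.
\item Hexagonal faces are not shown to be pairwise vertex-disjoint.
\item Odd faces of length $7$, $9$, $11$ are not excluded, and your rule cannot absorb them: a $7$-face starts at $-1$, and nothing bounds what a $9$-face gives away.
\end{itemize}

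The missing argument is short. Let $W$ be a facial walk of length at most $12$ and $H$ the subgraph it traces. Then $W$ never immediately reverses an edge, since that needs a vertex of degree one. So $H$ is connected, has minimum degree at least two, and has at most twelve vertices. Theorem~\ref{thm:pammer} gives $|E(H)|\le|V(H)|$, and minimum degree two gives the reverse, so $H$ is a cycle. $W$ traverses it once or twice. Twice would put both sides of every edge on this face, so every vertex of $H$ has degree two and $G=H$, which is excluded. The same reasoning shows two hexagonal faces cannot share a boundary cycle. Consequences:
\begin{itemize}
\item Faces of length at most $5$ disappear: there are no $2$-cycles, $4$-cycles are excluded by Theorem~\ref{thm:2}\ref{fact:C4}, and odd lengths are handled by parity as follows.
\item Odd faces of length at most $11$ disappear by your own parity remark, since a facial cycle bounds a disk and is therefore two-sided.
\item Lemma~\ref{lem:intersections} now applies to faces directly.
\end{itemize}

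\textbf{The decagon count.} Your decagon bound is only asserted. Moreover, the claim that consecutive edges of a face cannot both border hexagons is false. A single hexagon can border two consecutive edges of $g$ through a common vertex of degree two; that is exactly why Lemma~\ref{lem:intersections}\ref{lem:610} is needed. Vertex-disjointness only forbids consecutive edges bordering \emph{different} hexagons. Together with Lemma~\ref{lem:intersections}\ref{lem:610}, this gives that every three consecutive edges of a decagon include an uncovered one. Each covered edge lies in exactly three of the ten cyclic triples, and each triple contains at most two covered edges, so $3c\le 20$ and $c\le 6$.

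For faces of length $\ell\ge 12$ you need no pattern at all. Your ``at most alternating'' justification is unfounded, but the trivial loss bound $\ell/3\le \ell-8$ already suffices.
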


For the proof of Lemma~\ref{lem:main}, let $G$ satisfy its hypotheses. By Theorem~\ref{thm:2}\ref{fact:plane}
and~\ref{fact:parity}, choose a plane embedding if $G$ is bipartite,
and a parity embedding in the projective plane otherwise. We view the
plane embedding on the sphere, so that the outer face is treated in
the same way as the other faces.

Both embeddings are \emph{cellular}, meaning that every face is an open
disk. This is standard for a connected graph embedded in the sphere.
For the projective plane, take a one-sided odd cycle, add tree
branches to obtain a connected spanning unicyclic subgraph, and then add
the remaining embedded edges. The unicyclic subgraph has a disk as its
complementary face, and each additional edge splits a disk face into two
disk faces.

Let $f$ be the number of faces in that embedding, and let $d(F)$ be the
length of a boundary walk of face $F$, counted with multiplicity. A
boundary walk may repeat vertices or edges; an edge is counted twice
if both its sides belong to the same face. Using Euler's formula and
double counting gives
\begin{equation}\label{eq:euler}
 n-m+f=\chi
 \qquad\text{and}\qquad
 \sum_F d(F)=2m.
\end{equation}
Our goal is to show $2m\ge8f$. We first establish the required
properties of short faces. Call faces of degree $6,8,10$ hexagonal,
octagonal and decagonal respectively.

\begin{lemma}\label{lem:walks}
Every face has even degree at least $6$. The boundary walk of every face
of degree at most $12$ is a simple cycle.
\end{lemma}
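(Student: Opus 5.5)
The plan is to first show that every facial walk of length at most $12$ is a simple cycle, and then derive the degree statement from parity together with Theorem~\ref{thm:2}\ref{fact:C4}. The tool for the first part is Theorem~\ref{thm:pammer}, in the form already noted: a connected subgraph on at most twelve vertices contains at most one cycle.

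Let $F$ be a face whose boundary walk $W$ has length $d\le 12$, and let $H$ be the subgraph of $G$ traced by $W$. Since $W$ is a closed walk with $d$ steps, $H$ is connected and has at most $d\le 12$ vertices. By Theorem~\ref{thm:pammer}, $H$ contains at most one cycle. Next I show that $H$ has minimum degree at least two. Suppose a vertex $v$ is incident with only one edge $e$ of $H$. When $W$ enters $v$ along $e$, it leaves along the successor of $e$ in the rotation at $v$, and this successor must again be $e$. That forces $\deg_G(v)=1$, contradicting the minimum-degree hypothesis of Lemma~\ref{lem:main}. So $H$ is connected, has minimum degree at least two, and has at most one cycle; hence $H$ is a single cycle $C$.

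A closed facial walk on a cycle never reverses direction, and it uses each edge at most twice (once per side). So $W$ goes around $C$ either once or twice. If once, $W$ is a simple cycle, as required. If twice, both sides of every edge of $C$ lie in $F$. Then at each vertex $v$ of $C$, both angles between the two $C$-edges are corners of $F$ traversed by $W$. If $\deg_G(v)>2$, some other edge would lie in one of these angles, and $W$ would have to turn onto it. Hence every vertex of $C$ has degree $2$ in $G$. Since $G$ is connected, this gives $G=C$, contradicting the assumption that $G$ is not a cycle. The main obstacle here is to phrase this rotation/corner argument carefully, so that it is valid for the nonorientable projective-plane embedding and not only on the sphere. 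I would phrase everything via local rotations at vertices, which works on any surface.

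For evenness, in the bipartite case every closed walk has even length. In the projective case, I use that the parity embedding condition is linear on the $\mathbb{Z}_2$ cycle space. Length parity and one-sidedness (intersection number with the orientation-reversing class) are both $\mathbb{Z}_2$-linear, and they agree on cycles, hence on all of the cycle space. A facial walk's length has the same parity as the set of edges it uses an odd number of times. Because $F$ is a disk (the embeddings are cellular), this edge set is null-homologous, hence two-sided, hence of even size. Thus every $d(F)$ is even.

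Finally, faces of degree $2$ or $4$ would, by the above, be bounded by simple cycles of length $2$ or $4$. The first is impossible in a simple graph, and the second is excluded by Theorem~\ref{thm:2}\ref{fact:C4}. Therefore every face has even degree at least $6$.
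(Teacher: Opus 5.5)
Your proof is correct and follows the paper's argument essentially step for step. Pammer's theorem plus the degree-one/no-backtracking observation forces the traced subgraph to be a single cycle, the rotation/corner argument rules out a double traversal because $G$ is not a cycle, and parity together with Theorem~\ref{thm:2}\ref{fact:C4} excludes degrees $2$ and $4$. The differences are cosmetic: you prove the two parts in the opposite order, and you phrase the parity step via $\mathbb{Z}_2$-linearity on the cycle space and null-homology of the face boundary, whereas the paper uses orientation reversal along closed walks and null-homotopy of the facial walk.
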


\begin{proof}
In the bipartite case, every closed walk has even length. In a parity
embedding into the projective plane, a closed walk has odd length
exactly when following it reverses local orientation. This holds for
simple cycles by Theorem~\ref{thm:2}\ref{fact:parity}, and extends to
all closed walks by decomposing them into simple cycles and doubled
edges. A facial walk is the boundary walk of a disk face and hence is
null-homotopic, even when it repeats vertices or edges. It therefore
preserves local orientation. Thus all facial lengths are even in either case.

A facial walk cannot immediately reverse an edge, since that would
require a vertex of degree $1$.
Indeed, the local sector between the incoming and outgoing edge ends
would have to be bounded by the same edge end on both sides, which can
only happen at a vertex of degree one.

Let $W$ be a facial walk of length at most $12$, and let $H$ be the
subgraph consisting of its edges and vertices, without multiplicity.
This subgraph is connected and has minimum degree at least two: a
degree-one vertex in $H$ would force an immediate reversal in $W$.
Also $|V(H)|\le12$. Since $H$ is thrackleable,
Theorem~\ref{thm:pammer} gives $|E(H)|\le|V(H)|$. Minimum degree two
gives the reverse inequality, so $H$ is a single cycle.

The nonbacktracking walk $W$ goes around this cycle once or twice, since
each edge has only two sides and a face uses each side at most once.
If it went around twice, it would use both sides of every cycle edge.
At each vertex, the two cycle edge ends would have to be consecutive
in both directions in the local cyclic order: the walk turns from one
to the other on each side. Thus no additional edge could be incident
to a cycle vertex. But then since $G$ is connected, this means the whole graph must be $H$, since no other vertices can be connected to it on either side. This case is excluded in the hypotheses of Lemma \ref{lem:main}, so $W$ must not traverse the cycle twice. Therefore $W$ is a simple cycle.

A face of degree two would give a cycle of length two,
which is impossible in a simple graph; a face of degree four would give
a four-cycle, forbidden by Theorem~\ref{thm:2}\ref{fact:C4}. This proves the lemma.
\end{proof}

Different hexagonal faces have different boundary cycles. Otherwise both
sides of the same cycle would be facial, forcing all its vertices to
have degree two and the connected graph to be that cycle. Consequently,
Lemma~\ref{lem:intersections}\ref{fact:C6} implies that distinct hexagonal faces
have disjoint vertex sets.

We can now bound the number of edges across which a decagonal face can donate charge.

\begin{lemma}\label{lem:decagon}
For every decagonal face $F$, at most $6$ of its edges have a hexagonal
face on the opposite side.
\end{lemma}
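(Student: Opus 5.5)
The plan is to read the bound off the structure of the boundary of $F$, using only Lemma~\ref{lem:walks}, the vertex-disjointness of distinct hexagonal faces established just above, and Lemma~\ref{lem:intersections}\ref{lem:610}. By Lemma~\ref{lem:walks}, the boundary walk of $F$ is a simple $10$-cycle $C$. Similarly, the boundary of every hexagonal face is a simple $6$-cycle. Call an edge of $C$ \emph{marked} if the face on its other side is hexagonal. Each marked edge $e$ then lies on the boundary $6$-cycle of a hexagonal face $H_e$. This face is uniquely determined by $e$, since the other side of $e$ is a single face.

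First I show that consecutive marked edges come from the same hexagonal face. Let $e,e'$ be marked edges that are consecutive on $C$. They share a vertex, so $H_e$ and $H_{e'}$ share a vertex. Distinct hexagonal faces have disjoint vertex sets, so $H_e=H_{e'}$. Hence each maximal run of consecutive marked edges is a subpath $P$ of $C$ all of whose edges lie on the boundary $6$-cycle of a single hexagonal face $H$.

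Next I bound the length of each run. Suppose a run had three or more edges. Its first three edges form a path of three edges lying in both $C$ and the $6$-cycle bounding $H$: these edges are consecutive in $C$, and a path of three edges contained in a $6$-cycle is a subpath of it. This contradicts Lemma~\ref{lem:intersections}\ref{lem:610}. The same argument shows that not all ten edges of $C$ are marked, since then $C$ and one $6$-cycle would share far more than three consecutive edges. So every run has at most two edges, and the runs are separated on $C$ by at least one unmarked edge.

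Finally I count. Let $k$ be the number of runs. Going around $C$, each run uses at most two edges and is followed by at least one unmarked edge. Hence $k\ge1$ runs require $3k\le 10$ edges in the extremal pattern, so $k\le3$, and the number of marked edges is at most $2k\le6$. If $k=0$ there is nothing to prove. The only point needing care is the run-length step, namely checking that a run lies inside a single hexagon's boundary, and this is exactly what vertex-disjointness provides. Note that one hexagonal face may meet $C$ in several separate runs, but this does not affect the count.
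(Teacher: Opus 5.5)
Your approach is the same as the paper's. Vertex-disjointness of hexagonal faces forces consecutive marked edges into a single hexagon, and Lemma~\ref{lem:intersections}\ref{lem:610} then caps each run at two edges. These structural steps are correct, as is your check that not all ten edges are marked.

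The final count, however, is wrong as written. The inference ``$3k\le 10$, so $k\le 3$'' does not follow. Runs may consist of a single edge, and nothing you have established rules this out. So $k$ may a priori be $4$ or $5$: alternating marked and unmarked edges gives $k=5$. Hence $c\le 2k\le 6$ is not justified by what you wrote.

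The repair is one line. Since $k\ge1$ and some edge is unmarked, the $k$ runs are separated by at least $k$ unmarked edges. The number $c$ of marked edges therefore satisfies both $c\le 2k$ and $c\le 10-k$. Adding the first inequality to twice the second gives $3c\le 20$, hence $c\le 6$.

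This is exactly the inequality the paper obtains more directly. It notes that every three consecutive edges of the decagon contain an uncovered edge. It then double counts over the ten cyclic triples of consecutive edges: each triple contains at most two covered edges, and each covered edge lies in three triples. That route avoids introducing runs at all.
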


\begin{proof}
By Lemma~\ref{lem:walks}, $F$ has a simple cycle boundary. Call an edge
of this cycle covered if a hexagonal face lies on its opposite side.
By Lemma~\ref{lem:intersections}\ref{lem:610}, any hexagonal face covers at most two
consecutive edges on the decagon. Two consecutive edges of the decagon
cannot be covered by different hexagons, as any pair of hexagons must
be vertex-disjoint. Thus every three consecutive edges of the decagon include an uncovered
edge. Among the ten cyclic triples of consecutive edges, each contains
at most two covered edges, and each covered edge belongs to exactly
three triples. If $c$ edges are covered, then $3c\le20$, so $c\le6$.
\end{proof}

\begin{proof}[Proof of Lemma~\ref{lem:main}]
Assign initial charge $d(F)$ to every face. Transfer $1/3$ across each
edge incident with a hexagonal face, from the opposite face to that
hexagon. Hexagonal faces are pairwise vertex-disjoint, so none donates
to another hexagon. Each hexagon gets charge $8$ by receiving $1/3$
along each of its six edges. Octagons have no hexagonal neighbours by
Lemma~\ref{lem:intersections}\ref{lem:68}, so they retain charge $8$. Decagons donate charge
across at most $6$ edges by Lemma~\ref{lem:decagon}, so they retain
charge at least $10-6/3=8$. Every face of degree $d\ge12$ donates at
most $d/3$ and retains at least $2d/3\ge8$.

Thus every face finishes with charge at least $8$. The total charge is
unchanged and equals $2m$, so $8f\le2m$. Equation~\eqref{eq:euler} now gives
\[
 \frac m4\ge f=\chi+m-n,
 \qquad\text{which implies}\qquad
 m\le\frac{4(n-\chi)}3.
\]
Since $m$ is an integer, rounding down proves the lemma.
\end{proof}

The proof of Lemma~\ref{lem:main} uses thrackleability only to obtain
the prescribed embedding, exclude four-cycles, and ensure that each
connected subgraph on at most twelve vertices contains at most one
cycle. The last condition is the local consequence of
Theorem~\ref{thm:pammer}. Even for bipartite plane graphs, these
hypotheses alone do not give a smaller coefficient.

\begin{proposition}\label{prop:grid}
There is a sequence of bipartite plane graphs of girth eight in which
every connected subgraph on at most twelve vertices contains at most
one cycle, and for which the edge-to-vertex ratio tends to $4/3$.
\end{proposition}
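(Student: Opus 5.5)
We will take a finite square grid and subdivide every edge once. Concretely, let $Q_k$ be the $k\times k$ grid graph, i.e.\ the Cartesian product of two paths on $k$ vertices. Let $G_k$ be obtained from $Q_k$ by inserting one new vertex in the interior of every edge.

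\textbf{Counting.} $Q_k$ has $k^2$ vertices and $2k(k-1)$ edges. Therefore $G_k$ has
\[
n=k^2+2k(k-1)\qquad\text{and}\qquad m=4k(k-1)
\]
edges, so $m/n\to4/3$ as $k\to\infty$. This matches the equality case of the discharging: the bounded faces of $G_k$ are octagons, being subdivided squares. Only the outer face is long.

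\textbf{Basic properties.} $G_k$ is plane, because it is a subdivision of a plane graph. It is bipartite, with the original grid vertices on one side and the subdivision vertices on the other. Every cycle of $G_k$ is a subdivided cycle of $Q_k$, so it has length twice a grid cycle length. Since $Q_k$ has girth $4$, $G_k$ has girth $8$.

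\textbf{The twelve-vertex condition.} Suppose a connected subgraph $H\subseteq G_k$ contains two distinct cycles. Then $H$ contains a theta graph, a figure-eight (two cycles sharing exactly one vertex), or a dumbbell (two disjoint cycles joined by a path). It suffices to show each of these has at least $13$ vertices.

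A figure-eight or dumbbell contains two cycles of length at least $8$ meeting in at most one vertex. It therefore has at least $15$ vertices.

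A theta in $G_k$ is the subdivision of a theta in $Q_k$ with internally disjoint paths of lengths $a,b,c$ between two branch vertices. After subdivision it has $2(a+b+c)-1$ vertices. So it suffices to show $a+b+c\ge7$, and we argue as follows.
\begin{itemize}
\item Since $Q_k$ is bipartite, $a,b,c$ have the same parity.
\item Since $Q_k$ has girth $4$, every pairwise sum is at least $4$.
\item At most one of the paths can be a single edge, because $Q_k$ is simple.
\item If one path has length $1$, the other two are odd and at least $3$, giving $a+b+c\ge7$.
\item If all three lengths are even, the only way to get $a+b+c\le6$ is $a=b=c=2$. That would require two grid vertices with three common neighbours.
\end{itemize}

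\textbf{The main step.} The only point needing genuine checking is ruling out $a=b=c=2$, i.e.\ showing that $Q_k$ contains no $K_{2,3}$. Two grid vertices at distance two have at most two common neighbours. If they are diagonal, the common neighbours are the two other corners of their unit square. If they are collinear, there is one common neighbour, the middle vertex. Hence every theta in $G_k$ has at least $2\cdot7-1=13$ vertices, so every connected subgraph of $G_k$ on at most twelve vertices contains at most one cycle. This completes the construction.
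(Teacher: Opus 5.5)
Your proof is correct and is essentially the paper's argument: the same subdivided square grid with the same vertex and edge counts, and the same decisive fact that two grid vertices have at most two common neighbours, so $K_{2,3}$ does not occur. The only difference is bookkeeping. The paper prunes leaves so that what remains is the subdivision of a connected $J\subseteq Q_k$ with $e\ge v+1$, rules out $v\le5$ by listing small bipartite graphs, and concludes $v+e\ge 2v+1\ge13$; you instead split into theta, figure-eight and dumbbell and use parity and girth on the theta path lengths (your case list there omits three odd lengths none equal to $1$, but then each is at least $3$ and $a+b+c\ge9$ trivially).
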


\begin{proof}
For $r\ge3$, start with the $r$-by-$r$ square grid and subdivide every
edge once. The resulting graph is bipartite, has girth eight, and has
$3r^2-2r$ vertices and $4r(r-1)$ edges. It remains to verify the
condition on small subgraphs.

Suppose a connected subgraph contains two distinct cycles. Delete
degree-one vertices until none remains. The remaining subgraph still
contains both cycles, and every subdivision vertex in it has both its
incident edges. It is therefore the subdivision of a connected subgraph
$J$ of the original grid, with $v$ vertices and $e\ge v+1$ edges. We
claim that $v\ge6$. Indeed, a simple bipartite graph on at most four
vertices has at most as many edges as vertices. On five vertices the
only bipartite graph with at least six edges is $K_{2,3}$, which cannot
occur in the square grid: two grid vertices have at most two common
neighbours. The subdivision of $J$ thus has $v+e\ge2v+1\ge13$
vertices. Finally, $4r(r-1)/(3r^2-2r)\to4/3$, as claimed.
\end{proof}

\section{Leaves, cycles and disconnected components}\label{sec:reductions}

\begin{proof}[Proof of Theorem~\ref{thm:1}]
First suppose $G$ is connected. If it is a tree or is unicyclic, then
$m\le n\le4(n-1)/3$ for $n\ge4$, as required. Otherwise repeatedly
delete leaves to obtain its connected $2$-core $H$. Write
$h=|V(H)|$ and $e=|E(H)|$. The graph $H$ has minimum degree at least
two and is not a cycle, so $h\ge4$ and $e>h$.

Lemma~\ref{lem:main} gives $e\le4(h-1)/3$, whether or not $H$
contains a triangle. Restoring the deleted leaves gives
\[
 m=e+(n-h)\le\frac{4(h-1)}3+(n-h)
   =\frac{4(n-1)}3-\frac{n-h}3
   \le\frac{4(n-1)}3.
\]

Finally, suppose $G$ has $r\ge2$ connected components, with orders
$n_1,\ldots,n_r$ and edge counts $m_1,\ldots,m_r$. The connected bound
just proved implies $m_i\le4n_i/3-1$ when $n_i\ge4$; the same
inequality holds directly for every simple graph on one, two or three
vertices. Therefore
\[
 m=\sum_{i=1}^{r}m_i\le\frac43\sum_{i=1}^{r}n_i-r
   =\frac43n-r\le\frac43n-2\le\frac{4(n-1)}3.
\]
Since $m$ is an integer, rounding down proves the theorem.
\end{proof}

\paragraph{Funding.}
This research was supported by the European Research Council under
Advanced Grant GeoScape no.~882971.

\paragraph{Declaration of competing interest.}
The author declares no competing interests.

\paragraph{Research methods and data availability.}
During the research, OpenAI's ChatGPT assisted in exploring discharging
rules, checking the argument for short facial walks, and developing the
square-grid example. The only computational result
used in the proof is Pammer's verification
\cite[Observation~5.16]{Pammer2014}. With AI assistance, we independently verified Pammer’s conclusion for graphs on at most twelve vertices by checking all 126 possible minimal counterexample types. The source code, verification certificates, and instructions for reproducing the computation are available at \href{https://github.com/DanielGSimon/Conway-s-thrackle-conjecture-12-vertex/releases/tag/v1.0}{GitHub release (version 1.0)}.

\paragraph{Declaration of generative AI and AI-assisted technologies
in the manuscript preparation process.}
The author wrote the initial manuscript by hand and used 
ChatGPT to transcribe it into \LaTeX, locate and check references,
and revise parts of the exposition. The author takes full
responsibility for the mathematical claims and the final text.

\end{document}